\documentclass[reqno]{amsart}
\usepackage{amsfonts,amsmath,amsthm,amssymb,latexsym, cite}%
\usepackage{graphicx,verbatim}
\usepackage[colorlinks,plainpages, citecolor=red,
bookmarksnumbered,linkcolor=blue]{hyperref}

\theoremstyle{plain}
\newtheorem{thm}{Theorem}
\newtheorem{defn}{Definition}

\newtheorem{lem}{Lemma}

\numberwithin{equation}{section}

\renewcommand{\Im}{\mathop{\rm Im}}
\newcommand{\supp}{\mathop{\rm supp}}

\renewcommand{\kappa}{\varkappa}
\newcommand{\rmi}{i}
\newcommand{\Real}{\mathbb R}
\newcommand{\eps}{\varepsilon}
\newcommand{\en}{{\eps\nu}}

\newcommand{\cI}{\mathcal{I}}
\newcommand{\cT}{\mathcal{T}}

\newcommand{\cH}{\mathcal{H}}
\newcommand{\OprT}{\mathcal{T}_\eps}
\newcommand{\OprH}{\mathcal{H}_\en}
\newcommand{\OprHe}{\mathcal{H}_{\eps\hskip0.4pt\nu_\eps}}
\newcommand{\dom}{\mathop{\rm dom}}
\newcommand{\ra}{\rangle}
\newcommand{\la}{\langle}
\newcommand{\fpr}[1]{{#1}^{(-1)}}
\newcommand{\spr}[1]{{#1}^{(-2)}}
\newcommand\xe{\left(\tfrac x\eps\right)}

\newcommand\se{\left(\tfrac s\eps\right)}

\begin{document}

\title[Some remarks on 1D Schr\"{o}dinger operators]
{Some remarks on 1D Schr\"{o}dinger operators with localized magnetic and electric potentials}

\author{Yuriy Golovaty}%

\address{Department of Differential Equations,
  Ivan Franko National University of Lviv\\
  1 Universytetska str., 79000 Lviv, Ukraine}

\subjclass[2000]{Primary 34L40, 34B09; Secondary  81Q10}

\begin{abstract}
One-dimensional Schr\"{o}dinger operators with singular perturbed magnetic and electric potentials are considered. We study the strong resolvent convergence of two families of the operators with potentials shrinking to a point. Localized $\delta$-like  magnetic fields are combined with $\delta'$-like perturbations of the electric potentials as well as localized rank-two perturbations. The limit results obtained heavily depend on zero-energy resonances  of the electric potentials. In particular, the approximation for a wide class of point interactions in one dimension is obtained.
\end{abstract}

\keywords{1D Schr\"{o}dinger operator, magnetic potential, zero-energy resonance, half-bound state, short range interaction, point interaction, $\delta$-potential, $\delta'$-potential}
\maketitle

\section{Introduction}
The present paper is concerned with the  convergence of families of singularly perturbed one-dimensional magnetic Schr\"{o}dinger operators.
Our motivation of the study on this convergence comes from an application to the scattering of quantum particles by sharply localized  potentials and finite rank perturbations. The main purpose is to construct solvable models in terms of the point interactions describing with admissible fidelity the real quantum interactions.
The Schr\"{o}dinger operators with  potentials that are distributions supported on  discrete sets (such potentials are usually termed point interactions) have attracted consi\-de\-rable attention since the $1980$s.
It is an extensive subject with a large literature (see \cite{AlbeverioGesztesyHoeghKrohnHolden2edition, AlbeverioKurasov}, and the references given therein).

It is well-known that all nontrivial point interactions at a point $x$ can be described by the coupling conditions
\begin{equation}\label{PIConditions}
\begin{pmatrix} \psi(x+0) \\ \psi'(x+0) \end{pmatrix}
=e^{i\varphi}\begin{pmatrix} c_{11} & c_{12} \\ c_{21} & c_{22} \end{pmatrix}
\begin{pmatrix} \psi(x-0) \\ \psi'(x-0)\end{pmatrix},
\end{equation}
where $\varphi\in [-\frac{\pi}{2},\frac{\pi}{2}]$, $c_{kl}\in\mathbb{R}$, and $c_{11}c_{22}-c_{12}c_{21}=1$ (see, e.g., \cite{AlbeverioDabrowskiKurasov:1998, ChernoffHughes:1993}). The nontrivia\-li\-ty of point interactions means that  the associated self-adjoint operator cannot be presented as a direct sum of two operators acting in $L_2( -\infty, 0)$ and $L_2(0, \infty)$.
For the quantum systems described by the Schr\"{o}dinger operators with regular potentials localized in a neighbourhood of $x$ one can often assign the Schr\"{o}dinger operators with the point interactions \eqref{PIConditions} so that the corresponding zero-range models govern the  quantum dynamics of the true interactions with adequate accuracy, especially for the low-energy particles.
In this context, the inverse problem is also of in\-te\-rest. The important question is  how to approximate a given point interaction by  Schr\"{o}dinger operators with  localized regular potentials or  finite-rank perturbations.

We study the families of the Schr\"{o}dinger operators that can be partially viewed  as  regularizations of the pseudo-Hamiltonians
\begin{equation}\label{HeuiristicOprs}
\begin{gathered}
    \left(\rmi\,\frac{d}{dx}+a \delta(x)\right)^2+b \delta'(x)+c\delta(x), \\
    \left(\rmi\,\frac{d}{dx}+a \delta(x)\right)^2+b \big(\langle \delta'(x),\,\cdot\,\rangle \,\delta(x)
  +\langle \delta(x),\,\cdot\,\rangle\, \delta'(x)\big)+c\delta(x),
\end{gathered}
\end{equation}
where $\delta$ is Dirac's delta function. We note that
$\delta'(x) y=y(0)\delta'(x)-y'(0)\delta(x)$ for continuously differentiable functions $y$  at the origin. Thus we may formally
regard the $\delta'$ potential as rank-two perturbation
$\delta'(x)y= \langle \delta(x),y\rangle\, \delta'(x)+\langle \delta'(x),y\rangle\, \delta(x)$.
However, both the heuristic operators have  generally no ma\-the\-ma\-ti\-cal meaning.
So it is not surprising that  different  re\-gu\-larizations of the distributions in \eqref{HeuiristicOprs}  lead to different  self-adjoint operators in the limit. Therefore the pseudo-Hamiltonians \eqref{HeuiristicOprs} can be regarded   as  a symbolic notation only for a wide variety of quantum systems with  quite different properties depending on the shape of the short-range potentials.

Recently a class of the Schr\"{o}dinger operators with
piece-wise constant $\delta'$-poten\-tials were studied by Zolotaryuk a.o. \cite{Zolotaryuk:2008, Zolotaryuk:2010, Zolotaryuks11, ZolotaryukThreeDelta17}; the resonances in the transmission probability for the scattering problem were established. As was shown in \cite{GolovatyManko:2009, GolovatyHryniv:2010, GolovatyHryniv:2013, MankoJPA:2010} these resonances deal with the existence of zero-energy resonances and the half-bound states for singular localized potentials. The zero-energy resonances have a profound effect on the limiting behaviour of the Schr\"{o}dinger operators with $\delta'$-poten\-tials. Such operators also arose in connection with the appro\-xi\-mation of  smooth planar quantum waveguides by  quantum graph \cite{AlbeverioCacciapuotiFinco:2007,CacciapuotiExner:2007, CacciapuotiFinco:2007}; a similar resonance phenomenon  was  obtained. The reader also interested in the literature on other aspects of $\delta'$-potentials and  $\delta'$-interactions as well as approximations of point interactions by local and  non-local perturbations is referred to \cite{AlbeverioNizhnik2013, AlbeverioFassariRinaldi2013, AlbeverioFassariRinaldi2015, ExnerManko2014, GadellaNegroNietoPL2009, GadellaGlasserNieto2011, GadellaGarcia-Ferrero2014}.

It is known that  one dimensional Schr\"{o}dinger operators
\begin{equation*}
    H(b)=\left(\rmi\frac{d}{dx}+b(x)\right)^2+V(x)
\end{equation*}
with continuous magnetic potentials  are not especially interesting, because any continuous  field $b$ is equivalent under a smooth gauge transformation to $0$. This means that the operator $H(b)$ with a continuous gauge field  is unitarily equivalent to the Schr\"{o}dinger operator $H(0)=-\frac{d^2}{dx^2}+V(x)$ without a magnetic field.
The authors of \cite{CoutinhoNogamiTomio:1999} have even asserted that the phase parameter $\varphi$ in conditions \eqref{PIConditions} is redundant and it produces no interesting effect. They have stated that if the time-reversal invariance is imposed, the number of the parameters that specify the interactions \eqref{PIConditions} can be reduced to three.

For the case of  singular magnetic potentials, however, there are certain nontrivial examples \cite{AlbeverioFeiKurasov}, pointing out that this case is more subtle.  Albeverio, Fei and Kurasov \cite{AlbeverioFeiKurasov} have shown that the  phase parameter is  not redundant if nonstationary problems are  concerned.  The  phase parameter can  be  interpreted as  the  amplitude of a  singular gauge field.
As stated in \cite{KurasovJMAA:1996} a nonzero phase $\varphi$  in the coupling conditions \eqref{PIConditions} may appear if and only if the singular gauge field is present. However, it is noteworthy that the  factor $e^{i\varphi}$ also appeared in the solvable model for the Schr\"{o}dinger operators without a magnetic field that is perturbed by a rank-two operator \cite{GolovatyIEOT2018}. We also want to note that  Theorem~\ref{ThmT} in the present paper gives an example of an exactly solvable model in which the magnetic field has an effect on all coefficients $c_{kl}$ in   \eqref{PIConditions}, not only on factor $e^{i\varphi}$.

Another reason to study the 1-D Schr\"{o}dinger operators with  magnetic fields comes from the quantum graph theory
which is a useful tool in modelling numerous physical phenomena.
One of the fundamental questions of this theory consists of justifying the possibility of approximating dynamics of a quantum particle confined to real-world mesoscopic waveguides of small width $d$ by its dynamics on the graph obtained in the limit as $d$ vanishes. In  \cite{ExnerPost:2012} the authors demonstrated that any self-adjoint coupling in a quantum graph vertex can be approximated by a family of magnetic Schr\"{o}dinger operators on a tubular network built over the graph.

The magnetic Schr\"{o}dinger operators and the Dirac Hamiltonians  with Aharonov-Bohm fields  have been  discussed from various aspects by many authors. We confine ourself to a brief overview of the most relevant papers. For the mathematical foundation of the magnetic Schr\"{o}dinger operators we refer the reader to the paper of Avron, Herbst, and Simon \cite{AvronHerbstSimon:1978}.
In two dimension, the norm resolvent convergence of the Schr\"{o}dinger operators
\begin{equation*}
    \mathcal{H}_\eps=\left(\rmi \nabla+\eps^{-1}A(x/\eps)\right)^2+\eps^{-2}V(x/\eps)
\end{equation*}
with singularly scaled  magnetic and  electric potentials was studied by Tamura \cite{Tamura2001}. The magnetic potential had the  $\delta$-like field $\eps^{-2}b(x/\eps)=\eps^{-1}\nabla\times A(x/\eps)$, and $b$ and $V$ were smooth vector functions in $\Real^2$ of compact support.  The limit operator strongly depends on the total flux of magnetic field and on the resonance space at zero energy. The scattering by a magnetic field with small support and the convergence to the scattering amplitude by $\delta$ magnetic field were studied in \cite{Tamura1999}. In \cite{Tamura2003}, the case of relativistic particles moving in the Aharonov-Bohm magnetic field with a $\delta$-like singularity was considered.
The author approximated  the point–like field by smooth ones  and found the limit self-adjoint operators uniquely specified by physically and mathematically reasonable  boundary conditions at the origin.

The present paper can be viewed as a natural continuation of  our previous works
\cite{GolovatyMFAT:2012, GolovatyIEOT:2012, GolovatyJPA2018, GolovatyIEOT2018}, in which the case without of a magnetic field was treated.


\section{Statement of Problem and Main Results}
\label{SecStatement}

Let us consider the Schr\"{o}dinger operator
\begin{equation*}
  H_0=-\frac{d^2}{dx^2}+V_0
\end{equation*}
in $L_2(\Real)$,  where potential $V_0$ is  real-valued, measurable and locally bounded. We also assume that $V_0$ is bounded from below in $\Real$.
We turn now to our primary task of studying the limit behaviour of two families of operators in $L_2(\Real)$, which can be treated as perturbations of $H_0$.

\subsection{Hamiltonians with localized potentials}
First we consider the self-adjoint operators
\begin{equation}\label{OprHenm}
\OprH= \Big(\rmi\,\frac{d}{dx}
    +\frac{1}{\eps}\,A\Big(\frac{x}{\eps}\Big)\Big)^2
    +V_0(x)
    +\frac{\alpha}{\eps^2}\,V\Big(\frac{x}{\eps}\Big)
    +\frac{1}{\nu}\,U\Big(\frac{x}{\nu}\Big),
\end{equation}
where  $\eps$ and $\nu$ are small positive parameters, and  $\alpha$ is a real coupling constant.
Let $A$, $V$ and $U$ be  real-valued, measurable and  bounded  functions of compact support.  Suppose furthermore that $A\in \mathcal{AC}(\Real)$. The domain of $\OprH$ coincides with $\dom H_0$, because the perturbation has a compact support.
Note that we consciously equipped potential $V$ only  with a coupling constant. As we will see later, the limit behaviour of $\OprH$ crucially depends on $\alpha$.

The potentials $\alpha\eps^{-2}V(\eps^{-1}x)+\nu^{-1}U(\nu^{-1}x)$ converge, as $\eps$ and $\nu$ go to zero, to a distribution having the form $b_1\delta'(x)+b_0\delta(x)$, if $V$ has a zero-mean value, and they diverge otherwise. Hence parameter $\eps$ describes the rate of shrinking for the $\delta'$-like potential (as well as the magnetic potential), while $\nu$ is the rate of shrinking for the $\delta$-like potential.
The sequence $\eps^{-1}A(\eps^{-1}x)$ converges to  $\mu\delta(x)$ as $\eps\to 0$ in the sense of distributions, where
\begin{equation}\label{Sigma}
     \mu=\int_\Real A(x)\,dx.
\end{equation}
In the partial cases, operators $\OprH$ can be regarded as a regularization of the first pseudo-Hamiltonian in \eqref{HeuiristicOprs}.

Let us introduce some characteristics of the potentials $V$ and $U$.
\begin{defn}
 We say that the Schr\"odinger operator~$-\frac{d^2}{d x^2}+\alpha V$ in $L_2(\Real)$ possesses a \emph{zero-energy resonance}  if there exists a non trivial solution~$v_\alpha\colon \Real\to\Real$ of the equation
$ -v'' +\alpha V v= 0$
that is bounded on the whole line. We call $v_\alpha$ the \emph{half-bound state} of $\alpha V$.
\end{defn}
We will simply say that
the potential $\alpha V$ is \emph{resonant} and  it possesses a half-bound state $v_\alpha$.
Let us denote by $\mathcal{R}(V)$ the set of all coupling constants $\alpha$ for which the potential $\alpha V$ is resonant, and introduce the mapping $\theta\colon \mathcal{R}(V)\to\Real$ defined by
\begin{equation}\label{MapTheta}
  \theta(\alpha)=\frac{v_\alpha^+}{v_\alpha^-},
\end{equation}
where $v_\alpha^-=\lim\limits_{x\to-\infty}v_\alpha(x)$ and
$v_\alpha^+=\lim\limits_{x\to+\infty}v_\alpha(x)$.
Let $\Lambda=[0,+\infty]$ be the set containing the point $+\infty$.

We also define the mapping $\gamma\colon \mathcal{R}(V)\times\Lambda\to \Real$ as follows:
\begin{align}
\label{MapKappa0}
&\gamma(\alpha, 0)= \frac{v_\alpha^2(0)}{v_\alpha^-\,v_\alpha^+} \,\int_\Real U\,dt,
\\\label{MapKappa}
&\gamma(\alpha, \lambda)= \frac{1}{v_\alpha^-\,v_\alpha^+}\, \int_\Real U(t)\, v_\alpha^2(\lambda t)\,dt \qquad \text{for } \lambda\in(0,+\infty),
\\\label{MapKappaInfty}
&\gamma(\alpha, +\infty)=\theta(\alpha)\int_{\Real_+}\kern-2pt U\,dt+
    \theta(\alpha)^{-1}\int_{\Real_-}\kern-2pt U\,dt.
\end{align}
We follow the notation used in \cite{GolovatyIEOT:2012}.
This mapping describes different kinds of the resonance interactions between the potentials $\alpha V$ and $U$ in the limit.
Both the mappings $\theta$ and $\gamma$ are well defined as we will show below in Lemma~\ref{LemmaRV}.

Let us introduce the subspace $\mathcal{V}$ in $L_2(\Real)$ as follows.
We say that $h$ belongs to $\mathcal{V}$ if there exist two functions $h_-$ and $h_+$ belonging to $\dom H_0$ such that $h(x)=h_-(x)$ for $x<0$ and $h(x)=h_+(x)$ for $x>0$.

\begin{thm}\label{Thrm1}
Suppose that a sequence $\{\nu_\eps\}_{\eps>0}$ of positive numbers is such that $\nu_\eps\to 0$ and ratio $\nu_\eps/\eps$ tends to  $\lambda\in \Lambda$ as $\eps\to 0$, i.e., this ratio has a finite or infinite limit.
If $\alpha\in \mathcal{R}(V)$, then  family of operators $\OprHe$ converges in the strong resolvent sense as $\eps\to 0$ to the operator $\cH=\cH(\alpha, \lambda)$  defined by $\cH\phi=-\phi''+V_0\phi$ on functions $\phi$ in~$\mathcal{V}$ subject to the   conditions
\begin{equation}\label{PointIntsH}
\begin{pmatrix} \phi(+0) \\\phi'(+0) \end{pmatrix}
=
e^{i \mu}\begin{pmatrix}
  \theta(\alpha) &  0\\
  \gamma(\alpha,\lambda) &  \theta(\alpha)^{-1}
\end{pmatrix}
\begin{pmatrix} \phi(-0) \\ \phi'(-0) \end{pmatrix}.
\end{equation}
\end{thm}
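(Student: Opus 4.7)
The plan is to strip off the magnetic potential by a local gauge (unitary) transformation and so reduce Theorem~\ref{Thrm1} to the non-magnetic convergence results already established in \cite{GolovatyMFAT:2012,GolovatyIEOT:2012}. Set $\chi_\eps(x)=\int_{-\infty}^{x/\eps}A(s)\,ds$, $\Phi_\eps(x)=e^{i\chi_\eps(x)}$, and let $\mathcal{U}_\eps$ denote multiplication by $\Phi_\eps$ on $L_2(\Real)$. A direct computation gives $\bigl(i\tfrac{d}{dx}+\tfrac{1}{\eps}A(x/\eps)\bigr)\mathcal{U}_\eps=\mathcal{U}_\eps\cdot i\tfrac{d}{dx}$, from which
\[
\mathcal{U}_\eps^{*}\,\OprHe\,\mathcal{U}_\eps=\widetilde{\mathcal{H}}_{\eps\nu_\eps},\qquad \widetilde{\mathcal{H}}_{\eps\nu_\eps}:=-\frac{d^{2}}{dx^{2}}+V_0+\frac{\alpha}{\eps^{2}}V(x/\eps)+\frac{1}{\nu_\eps}U(x/\nu_\eps),
\]
i.e., $\OprHe$ is unitarily equivalent to the same family with the magnetic term erased. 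Since $A$ has compact support, $\Phi_\eps(x)=1$ for $x$ to the left of $\eps\cdot\supp A$ and $\Phi_\eps(x)=e^{i\mu}$ to the right, with $\mu$ given by~\eqref{Sigma}. Therefore $\mathcal{U}_\eps$ and $\mathcal{U}_\eps^{*}$ converge strongly in $L_2(\Real)$ to the step-unitary $\mathcal{U}$ and its adjoint, where $\mathcal{U}f(x)=f(x)$ for $x<0$ and $\mathcal{U}f(x)=e^{i\mu}f(x)$ for $x>0$.

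Invoke next the non-magnetic version of the theorem ($A\equiv 0$, $\mu=0$), which is the content of prior works in the cited references: $\widetilde{\mathcal{H}}_{\eps\nu_\eps}$ converges in the strong resolvent sense to the self-adjoint operator $\widetilde{\mathcal{H}}$ acting as $-\phi''+V_0\phi$ on $\mathcal{V}$ subject to~\eqref{PointIntsH} with the overall $e^{i\mu}$ replaced by $1$. Granted this, the factorisation
\[
(\OprHe-z)^{-1}=\mathcal{U}_\eps\,(\widetilde{\mathcal{H}}_{\eps\nu_\eps}-z)^{-1}\mathcal{U}_\eps^{*}
\]
presents the resolvents as products of three uniformly bounded, strongly convergent families, hence the left-hand side converges strongly to $\mathcal{U}(\widetilde{\mathcal{H}}-z)^{-1}\mathcal{U}^{*}=(\mathcal{U}\widetilde{\mathcal{H}}\mathcal{U}^{*}-z)^{-1}$. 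This identifies the limit as $\cH=\mathcal{U}\widetilde{\mathcal{H}}\mathcal{U}^{*}$.

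Transporting the transmission conditions through $\mathcal{U}$ is immediate. For $\phi=\mathcal{U}\psi\in\dom\cH$ one has $\phi(-0)=\psi(-0)$, $\phi(+0)=e^{i\mu}\psi(+0)$, and identically for derivatives. Inserting these relations into the non-magnetic version of~\eqref{PointIntsH} (the one satisfied by $\psi\in\dom\widetilde{\mathcal{H}}$) reproduces~\eqref{PointIntsH} verbatim, including the overall factor $e^{i\mu}$.

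\textbf{Main obstacle.} The gauge reduction and the resolvent conjugation are short and essentially formal; the entire analytic weight of Theorem~\ref{Thrm1} sits in the strong resolvent convergence of the non-magnetic family $\widetilde{\mathcal{H}}_{\eps\nu_\eps}\to\widetilde{\mathcal{H}}$. That step rests on a delicate matched-asymptotic ODE analysis of the inner $\eps$-layer, governed in the limit by $-v''+\alpha Vv=0$ with the half-bound state $v_\alpha$ producing the jump ratio $\theta(\alpha)$, combined with a careful treatment of how the $\nu_\eps$-scale $\delta$-like potential $U$ interacts with the $\eps$-scale resonant layer. This interaction is precisely what forces the three distinct formulas \eqref{MapKappa0}--\eqref{MapKappaInfty} for $\gamma(\alpha,\lambda)$ depending on whether $\nu_\eps/\eps$ tends to $0$, to a finite positive number, or to $+\infty$; it is also where the assumption $\alpha\in\mathcal{R}(V)$ (the existence of a bounded half-bound state) is genuinely used.
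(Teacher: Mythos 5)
Your proposal is correct and follows essentially the same route as the paper: gauge away the magnetic term via multiplication by $e^{ia(x/\eps)}$, invoke the norm (a fortiori strong) resolvent convergence of the non-magnetic family from \cite{GolovatyMFAT:2012,GolovatyIEOT:2012}, and pass to the limit using the strong convergence of the gauge unitaries to the step unitary $e^{i\mu H}$. The paper packages your three-factor product argument as Lemma~\ref{LemmaStrongConv} together with Lemma~\ref{LemmaConvergenceUh}, but the substance is identical, including the transport of the interface conditions that yields the factor $e^{i\mu}$ in \eqref{PointIntsH}.
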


By analogy with the results in \cite{GolovatyIEOT:2012}, if potential $\alpha V$ is not resonant, the the limit operator is the direct sum of two Dirichlet operators acting in $L_2(-\infty,0)$ and $L_2(0,+\infty)$; that is, coupling conditions \eqref{PointIntsH} must be substituted by the Dirichlet condition $\phi(0)=0$.

It is worth noting that  explicit relations \eqref{MapTheta}-\eqref{MapKappaInfty} between the matrix entries $\theta(\alpha)$, $\gamma(\alpha,\lambda)$ and  potentials $V$ and $U$ make it possible to carry out a quantitative analysis of this quantum system, e.g. to compute approximate values of the scattering data.

\subsection{Hamiltonians with localized rank-two perturbations}
We now turn our attention to another family of operators
\begin{equation}\label{Teps}
\OprT=\Big(\rmi\,\frac{d}{dx}
    +\frac{1}{\eps}\,A\Big(\frac{x}{\eps}\Big)\Big)^2
    +V_0(x)+\frac1{\eps^3}\,F_\eps+\frac1\eps\, U\Big(\frac{x}{\eps}\Big),
\end{equation}
where $F_\eps=F_\eps(f_1,f_2)$ are  rank-two operators having the form
\begin{multline}\label{OprQ}
  (F_\eps \phi)(x)=
  \bar{\beta}\,\la f_2(\eps^{-1}\,\cdot\,),\phi\ra\,f_1\xe
  +\beta\,\la f_1(\eps^{-1}\,\cdot\,),\phi\ra\,f_2\xe
  \\=\int_\Real \Big(\bar{\beta} f_1\xe\bar{f}_2\se
  +\beta\bar{f}_1\se f_2\xe\Big)\phi(s)\,ds.
\end{multline}
Here $\langle\cdot,\cdot \rangle$ is the inner scalar product $L_2(\Real)$. From now on, the  norm in $L_2(\Real)$ will be denoted by  $\|\cdot\|$.
Ope\-rators $\OprT$ can be viewed as a regularization of the second pseudo-Hamiltonian in \eqref{HeuiristicOprs}.
Assume that  $f_1$, $f_2$ and $q$ are measurable and  bounded  functions of compact support and $\beta$ is a complex coupling constant. The potential $q$ is real-valued.

Let us  also consider  rank-two perturbation of the free the Schr\"{o}dinger operator
$$
   B=-\frac{d^2}{dx^2}+\bar{\beta}\,\la h_2,\,\cdot\,\ra\,h_1+\beta\,\la h_1,\,\cdot\,\ra\,h_2,\quad \dom B=W_2^2(\Real),
$$
where $h_1$ and $h_2$ are functions of compact support.
\begin{defn}
We say that   operator~$B$ possesses a \emph{zero-energy resonance}
provided there exists a nontrivial solution of the equation
 \begin{equation}\label{HBSEqn}
 -v''+\bar{\beta}\,\la h_2, v\ra\, h_1+\beta\,\la h_1,v\ra\, h_2= 0
\end{equation}
that is bounded on the whole line. This solution is called a \emph{half-bound state} of $B$.
We also say that $B$ admits a \emph{double zero-energy resonance}, if
there exist two linearly independent half-bound states.
\end{defn}

We will denote by $\mathcal{R}(h_1,h_2)$ the set of all coupling constants $\beta$, for which operator $B$ admits a double zero-energy resonance.

Let $h^{(-1)}$ and $ h^{(-2)}$ be the first and second  antiderivatives
\begin{equation*}
    h^{(-1)}(x)=\int_{-\infty}^x h(s)\, ds, \qquad   h^{(-2)}(x)=\int_{-\infty}^x (x-s)h(s)\, ds
\end{equation*}
for  functions of compact support. Note  if $h$ has zero mean, then $h^{(-1)}$ is also a function of compact support. Also, we set
\begin{equation}\label{FuncA}
    a(x)=\int_{-\infty}^{x}   A(t)\,dt.
\end{equation}
Let us introduce notation
\begin{equation}\label{GkNkP}
 g_k=e^{-i a} f_k, \quad  n_k=\|g_k^{(-1)}\|, \quad p=\la\fpr{g_1},\fpr{g_2}\ra,
\end{equation}
provided $g_1$ and $g_2$ are functions of zero mean values. Therefore $n_k$ and $p$ are well defined, since $\fpr{g}_k$ are functions of compact support. Let
\begin{equation*}
\omega_\beta=e^{i\arg(\beta^{-1}+p)}n_2 g_1^{(-2)}-n_1 g_2^{(-2)}.
\end{equation*}
Function $\omega_\beta$ is  constant outside  some compact set containing the supports of $f_k$. Of course $\omega_\beta(x)=0$ for negative $x$ with the large absolute value. Write
\begin{equation*}
 \kappa=\lim_{x\to+\infty}\omega_\beta(x).
\end{equation*}
In the case of the double zero-energy resonance function $\omega_\beta$ is a half-bound state of $B$ with $h_k=g_k$ (see Lemma~\ref{LemmaHBS} below).
 We also set
\begin{equation*}
 a_0=\int_\Real U\,dx,\qquad a_1=\int_\Real U\,\omega_\beta\,dx,\qquad a_2=\int_\Real U\,|\omega_\beta|^2\,dx.
\end{equation*}

\begin{thm}\label{ThmT}
 Assume that  $f_1$ and $f_2$ are linearly independent, $e^{-i a} f_1$ and $ e^{-i a} f_2$ have zero means, and   $\beta\in \mathcal{R}(e^{-i a}f_1,e^{-i a}f_2)$. Suppose also that  $a_2\neq \bar{\kappa} a_1$.
 Then operator family $\OprT$  converges as $\eps\to 0$ in the strong resolvent sense to  operator $\cT$  defined by $\cT\phi=-\phi''+V_0\phi$ on functions $\phi$ in~$\mathcal{V}$ subject to the conditions
 \begin{equation}\label{CouplCondsT}
 \begin{pmatrix} \phi(+0) \\\phi'(+0) \end{pmatrix}
= e^{i\big(\mu-\arg (a_2-\overline{\kappa} a_1)\big)}
\begin{pmatrix}
\dfrac{a_0|\kappa|^2-2{\rm Re}(\overline{\kappa} a_1)+a_2}{|a_2-\overline{\kappa} a_1|}    &    \kern-10pt\phantom{\dfrac{\int}{\int}}\dfrac{|\kappa|^2}{|a_2-\overline{\kappa} a_1|}\\
      \dfrac{a_0a_2-|a_1|^2}{|a_2-\overline{\kappa} a_1|}                &      \kern-10pt\phantom{\dfrac{\int}{\int}}\dfrac{a_2}{|a_2-\overline{\kappa} a_1|}
\end{pmatrix}
\begin{pmatrix} \phi(-0) \\ \phi'(-0) \end{pmatrix}.
\end{equation}
\end{thm}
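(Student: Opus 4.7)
The natural strategy is a two-step reduction. First I remove the magnetic field by a gauge transformation: let $(U_\eps\psi)(x)=e^{ia(x/\eps)}\psi(x)$, so that
\begin{equation*}
U_\eps^*\OprT U_\eps=-\frac{d^2}{dx^2}+V_0+\tfrac{1}{\eps^3}\tilde F_\eps+\tfrac{1}{\eps}U\xe,
\end{equation*}
where $\tilde F_\eps$ has the same rank-two form as $F_\eps$ but with $f_k$ replaced by $g_k=e^{-ia}f_k$. Since $a(y)$ vanishes for $y\ll 0$ and equals $\mu$ for $y\gg 0$, the unitary $U_\eps$ converges strongly to the multiplier $U_0$ acting as $1$ on $(-\infty,0)$ and as $e^{i\mu}$ on $(0,+\infty)$. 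Strong resolvent convergence for $\OprT$ will then follow from the analogous statement for the conjugated, non-magnetic operator, and reading $\tilde\phi=U_0^*\phi$ at the origin inserts the overall phase factor $e^{i\mu}$ in front of the resulting coupling matrix.

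Next I treat the non-magnetic problem by matched asymptotics in the inner variable $y=x/\eps$. Setting $\psi_\eps(y)=\tilde\phi_\eps(\eps y)$ and multiplying the inner equation by $\eps^2$ gives, at leading order, the half-bound-state equation
\begin{equation*}
-\psi_0''+\bar\beta\la g_2,\psi_0\ra g_1+\beta\la g_1,\psi_0\ra g_2=0.
\end{equation*}
The double-resonance hypothesis $\beta\in\mathcal R(g_1,g_2)$ together with the zero-mean conditions on $g_1,g_2$ makes the bounded-solution space exactly two-dimensional, spanned by the constant $1$ and by $\omega_\beta$ (this is Lemma~\ref{LemmaHBS}), with $\omega_\beta(-\infty)=0$ and $\omega_\beta(+\infty)=\kappa$. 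Hence I write $\psi_\eps=c_-\cdot 1+c_+\omega_\beta+\eps\psi_1+o(\eps)$; matching at $y\to\pm\infty$ with the outer solution gives the position relations $c_-=\tilde\phi(-0)$ and $c_-+\kappa c_+=\tilde\phi(+0)$.

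The remaining two relations arise from the solvability condition for the next-order inner equation
\begin{equation*}
-\psi_1''+\bar\beta\la g_2,\psi_1\ra g_1+\beta\la g_1,\psi_1\ra g_2=-U(y)\psi_0,
\end{equation*}
since its left-hand operator has precisely the kernel just identified. Testing the right-hand side against $1$ and $\omega_\beta$ yields two linear forms in $c_\pm$ with coefficients built from $a_0,a_1,a_2$; integration of $\psi_1''$ across the inner region expresses the jumps $\psi_1'(+\infty)-\psi_1'(-\infty)$ through the same pairings, and matching with the outer derivatives identifies them with $\tilde\phi'(\pm 0)$. The non-degeneracy hypothesis $a_2\neq\overline\kappa\, a_1$ makes the linear system in $c_\pm$ invertible, so these variables can be eliminated to produce a linear relation between $(\tilde\phi(\pm 0),\tilde\phi'(\pm 0))$. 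A direct but intricate calculation, together with the fact that the limit matrix must be unimodular (by the abstract theory of point interactions), identifies that relation with \eqref{CouplCondsT} after reinstating the phase $e^{i\mu}$ from Step~1; the global phase $e^{-i\arg(a_2-\overline\kappa\, a_1)}$ is forced by the complex normalisation of $c_\pm$.

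To promote the formal asymptotics to strong resolvent convergence I build an approximate resolvent $R_\eps f$ from $\phi=(\cT-z)^{-1}f$ plus an $\eps$-scale inner correction encoding $\psi_0,\psi_1$, and estimate $(\OprT-z)R_\eps f-f$ in $L_2(\Real)$. The \emph{main obstacle} is the $\eps^{-3}$ factor multiplying $F_\eps$: I must show that its action on the leading inner profile cancels to sufficient order (this is precisely where the double-resonance assumption and the zero-mean conditions on $g_k$ are used crucially), and that whatever it produces on the first-order correction combines with the $\eps^{-1}U(x/\eps)\phi$ term to leave only an $o(1)$ remainder in $L_2$. A secondary but nontrivial difficulty is keeping all complex phases and normalisations aligned so that the resulting matrix matches \eqref{CouplCondsT} entry by entry.
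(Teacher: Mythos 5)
Your first step is exactly the paper's: conjugating by the gauge transformation $(W_\eps\psi)(x)=e^{ia(x/\eps)}\psi(x)$ turns $\OprT$ into the non-magnetic operator $T_\eps=-\frac{d^2}{dx^2}+V_0+\eps^{-3}G_\eps+\eps^{-1}U(\cdot/\eps)$ with $g_k=e^{-ia}f_k$, the unitaries converge strongly to multiplication by $e^{i\mu H}$, and the limit coupling matrix of \eqref{CouplCondsT} is the non-magnetic one dressed with the phase $e^{i\mu}$ (note $\arg(a_2-\kappa\bar a_1)=-\arg(a_2-\overline{\kappa}a_1)$ since $a_2\in\Real$). One small caution here: the paper's Lemma~\ref{LemmaStrongConv} is stated for \emph{norm} resolvent convergence of the inner family, and its proof uses the operator-norm bound on the first term; if you only establish strong resolvent convergence of $T_\eps$, the conjugation argument still works, but you must rework that term with a moving-vector argument ($W_\eps^{-1}f\to W^{-1}f$ plus the uniform bound $|\Im\zeta|^{-1}$), which you do not address.

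The genuine gap is in the second half. The entire analytic content of the theorem --- that $T_\eps$ converges in the norm resolvent sense to the operator $T$ with the explicit matrix built from $a_0,a_1,a_2,\kappa$ --- is in the paper imported as a quoted theorem from \cite{GolovatyIEOT2018}; you instead sketch a matched-asymptotics re-derivation and stop precisely at the decisive points. You never carry out the elimination of $c_\pm$ that would actually produce the entries $\bigl(a_0|\kappa|^2-2{\rm Re}(\overline{\kappa}a_1)+a_2\bigr)/|a_2-\overline{\kappa}a_1|$, $|\kappa|^2/|a_2-\overline{\kappa}a_1|$, etc. (you appeal instead to ``a direct but intricate calculation'' and to unimodularity, and you assert rather than derive the global phase $e^{-i\arg(a_2-\overline{\kappa}a_1)}$); and the key quantitative step --- controlling the $\eps^{-3}G_\eps$ term on the corrector so that the residual $(\OprT-z)R_\eps f-f$ is $o(1)$ in $L_2$, which is exactly where the double-resonance and zero-mean hypotheses must be spent --- is labelled ``the main obstacle'' but not resolved. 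As it stands the argument identifies a plausible candidate limit but does not prove convergence, nor that the candidate matrix is the one in \eqref{CouplCondsT}. To close the gap you should either invoke the norm-resolvent theorem of \cite{GolovatyIEOT2018} for the family $T_\eps$ (which is all the paper does, and which also removes the need to reprove Lemma~\ref{LemmaHBS}-type facts) or actually execute the corrector construction: write out the first- and second-order inner problems, verify the solvability conditions against the two half-bound states $1$ and $\omega_\beta$, show that the hypothesis $a_2\neq\overline{\kappa}a_1$ is what makes the resulting $2\times2$ system for $c_\pm$ invertible, and prove the $L_2$ remainder estimate.
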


Note in these conditions that parameters   $a_1$, $a_2$ and $\kappa$ depend nonlinearly on  coupling constant $\beta$ as well as functions $f_1$, $f_2$, $a$ via  $\omega_\beta$; all elements of  the matrix are real, since $a_0$ and $a_2$ are real number.
The limit operator $\cT$ is self-adjoint, because the determinant of matrix in \eqref{CouplCondsT} is equal to $1$ (cf. \eqref{PIConditions}). In fact,
\begin{multline*}
|a_2-\overline{\kappa} a_1|^{-2}\, \det
\begin{pmatrix}
a_0|\kappa|^2-2{\rm Re}(\overline{\kappa} a_1)+a_2   &    |\kappa|^2\\
      a_0a_2-|a_1|^2                &      a_2
\end{pmatrix}\\=
|a_2-\overline{\kappa} a_1|^{-2}
\big(a_0a_2|\kappa|^2-2 a_2{\rm Re}(\overline{\kappa} a_1)+a_2^2
-a_0a_2|\kappa|^2+|\kappa|^2|a_1|^2
\big)\\=
|a_2-\overline{\kappa} a_1|^{-2}
\big(a_2^2-2 a_2{\rm Re}(\overline{\kappa} a_1)
+|\kappa|^2|a_1|^2
\big)
=|a_2-\overline{\kappa} a_1|^{-2}\,|a_2-\overline{\kappa} a_1|^2=1.
\end{multline*}
Though conditions \eqref{CouplCondsT} contain the full matrix, we can not  assert that it is possible to approximate any point interaction \eqref{PIConditions} by operators $\OprT$. For instance, such approximation does not exist for the point interactions \eqref{PIConditions} with matrices
\begin{equation*}
  \begin{pmatrix}
    c_{11} & 0 \\ c_{21} & c_{11}^{-1}
  \end{pmatrix},
\end{equation*}
where $c_{11}$ is different from $1$; if $\kappa=0$, then the matrix  in \eqref{CouplCondsT} has the unit diagonal.
Therefore Theorems \ref{Thrm1} and \ref{ThmT} are in some sense mutually complementary.

Remark also that for any pair of linearly independent functions $f_1$, $f_2$ satisfying the assumptions of the theorem there exists a wide class of potentials $U$ for which condition $a_2\neq \bar{\kappa} a_1$ holds.

In view of Theorems 1 and 2 in \cite{GolovatyIEOT2018} we can expect that there exist at least six essentially different cases of the limiting behaviour for $\OprT$ as $\eps\to 0$. However, in this paper we restrict ourselves to analyzing only the case that is described in Theorem~\ref{ThmT}. This case is the most interesting from the physical viewpoint.

\section{Zero-Energy Resonances and Half-Bound States}

We  show first that the set $\mathcal{R}(V)$ of all resonance coupling constants for operator $-\frac{d^2}{d x^2}+\alpha V$ is not empty and furthermore it is rich enough for any function $V$ of compact support.

\begin{lem}\label{LemmaRV}
  (i) For each measurable function $V$ of compact support,  the resonant set $\mathcal{R}(V)$ is a countable subset of the real line with one or two accumulation points at infinity.

  (ii) For each $\alpha\in \mathcal{R}(V)$, the corresponding half-bound state $v_\alpha$ is unique up to a scalar factor. Moreover both the limits
      \begin{equation}\label{LimitsValpha}
        v_\alpha^-=\lim\limits_{x\to-\infty}v_\alpha(x), \qquad
         v_\alpha^+=\lim\limits_{x\to+\infty}v_\alpha(x)
      \end{equation}
   exist and are different from zero.
\end{lem}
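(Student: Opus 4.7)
The plan is to reduce the problem to studying the zero set of a single entire function of $\alpha$. Pick a compact interval $[-r,r]\supset \supp V$. Off this interval every solution of $-v''+\alpha Vv=0$ is affine, so a bounded solution must reduce to a constant on each half-line; equivalently it satisfies the Neumann-type conditions $v'(-r)=v'(r)=0$. Conversely, any solution on $[-r,r]$ with those endpoint conditions extends by constants on the two half-lines to a half-bound state on all of $\Real$. Let $v_1(\cdot,\alpha)$ and $v_2(\cdot,\alpha)$ be the fundamental solutions determined by $v_1(-r,\alpha)=1$, $v_1'(-r,\alpha)=0$ and $v_2(-r,\alpha)=0$, $v_2'(-r,\alpha)=1$. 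Since $V$ is bounded with compact support, Picard iteration shows that $\alpha\mapsto v_j(x,\alpha)$ and $\alpha\mapsto v_j'(x,\alpha)$ are entire for every fixed $x$. A bounded solution is automatically a scalar multiple of $v_1(\cdot,\alpha)$, so $\alpha\in\mathcal{R}(V)$ iff the entire function $D(\alpha):=v_1'(r,\alpha)$ vanishes. This already gives uniqueness up to a scalar in~(ii).

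For (i), I first need $D\not\equiv 0$. The Picard expansion around $\alpha=0$ gives $D(\alpha)=\alpha\int_\Real V\,dt+O(\alpha^2)$, which settles the case $\int V\ne 0$. In the zero-mean case I would reinterpret $\mathcal{R}(V)$ as the spectrum of the indefinite Sturm-Liouville problem $-v''=-\alpha Vv$ on $[-r,r]$ with Neumann boundary conditions and weight $-V$, whose spectrum is a classical discrete subset of~$\Real$; alternatively, one can produce a single non-resonant $\alpha$ by a Riccati/Pr\"ufer comparison argument. Once $D$ is nontrivially entire, its real zero set is countable with no finite accumulation point, so the only possible accumulations are at $\pm\infty$.

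For the one-versus-two dichotomy I would argue by the sign of~$V$. If $V\ge 0$ (and $V\not\equiv 0$) and $\alpha>0$, then on $[-r,r]$ we have $v_1''=\alpha Vv_1\ge 0$ as long as $v_1\ge 0$; starting from $v_1(-r)=1$, $v_1'(-r)=0$, the function $v_1$ stays positive and convex, so $v_1'$ is nondecreasing and strictly positive on a set of positive measure, giving $D(\alpha)>0$. Hence accumulation is possible only at $-\infty$; the case $V\le 0$ is symmetric. If $V$ changes sign, a Pr\"ufer-phase analysis on $[-r,r]$ shows that the phase shift grows unboundedly as $\alpha\to\pm\infty$, so resonances appear arbitrarily far out in both directions.

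For the limits in (ii), the function $v_\alpha=v_1(\cdot,\alpha)$ is constant on each exterior half-line, so the limits in \eqref{LimitsValpha} exist automatically and $v_\alpha^-=v_1(-r,\alpha)=1$. Evaluating the constant Wronskian at $x=r$ gives
\[
v_1(r,\alpha)\,v_2'(r,\alpha)-v_2(r,\alpha)\,v_1'(r,\alpha)=1,
\]
and at any resonant $\alpha$ the last term vanishes, forcing $v_\alpha^+=v_1(r,\alpha)\ne 0$. The main obstacle is the accumulation-point count in~(i): the entire-function viewpoint immediately yields discreteness on $\Real$ with accumulation only at infinity, but pinning down the precise one-or-two dichotomy requires the quantitative large-$|\alpha|$ control on $D(\alpha)$ provided by Pr\"ufer or Sturm oscillation techniques.
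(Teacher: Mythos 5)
Your proposal is correct, and while it starts with the same reduction as the paper (a half-bound state is constant outside an interval containing $\supp V$, so $\alpha\in\mathcal{R}(V)$ exactly when the Neumann problem $v'(-r)=v'(r)=0$ on that interval has a nontrivial solution), the core mechanism is genuinely different. The paper treats that Neumann problem as an eigenvalue problem with weight $V$ and invokes the spectral theory of self-adjoint operators in the weighted space $L_2(V,\cI)$ (sign-definite $V$) or in a Krein space (indefinite $V$), getting reality, discreteness, accumulation only at infinity and simplicity of the spectrum from cited results; uniqueness of $v_\alpha$ then comes from simplicity, and $v_\alpha^{\pm}\neq0$ from uniqueness for the Cauchy problem. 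You instead work with the characteristic entire function $D(\alpha)=v_1'(r,\alpha)$: this yields uniqueness at once (every half-bound state is a multiple of $v_1$), yields $v_\alpha^{+}\neq0$ via the Wronskian identity rather than Cauchy uniqueness, and yields countability with no finite accumulation point from analyticity once $D\not\equiv0$ — a more elementary and explicitly computable route. The thinner spots in your write-up are exactly where you quietly re-import the paper's machinery: for zero-mean $V$ you fall back on the indefinite Sturm--Liouville (Krein-space) spectrum, and the existence of infinitely many resonances, as well as the one-versus-two accumulation dichotomy for sign-changing $V$, rest on an asserted but not executed Pr\"ufer/oscillation argument — which is the same level of appeal to standard theory the paper itself makes, so no real gap for the lemma as stated. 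You could make the nondegeneracy step self-contained by computing the $\alpha^2$ coefficient of $D$: it equals $\int_{-r}^{r}V(t)\int_{-r}^{t}(t-s)V(s)\,ds\,dt=-\int_{-r}^{r}\bigl(\int_{-r}^{t}V(s)\,ds\bigr)^{2}dt<0$ whenever $V\not\equiv0$ has zero mean, so $D\not\equiv0$ needs no external input.
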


\begin{proof}
Without loss of generality we  assume that $\supp V\subset \cI$, where $\cI=(-1,1)$.
Then operator $-\frac{d^2}{d x^2}+\alpha V$ possesses a half-bound state if and only if the problem
\begin{equation}\label{HBSproblemV}
  -v''+\alpha Vv=0,\quad x\in \cI, \qquad v'(-1)=0, \quad v'(1)=0
\end{equation}
has a non-trivial solution.  In fact, a half-bound state $v_\alpha$ is constant outside  $\cI$ as a bounded solution of equation $v''=0$ and hence  $v_\alpha'(-1)=v_\alpha'(1)=0$. From this we also deduce that there exist  the limits \eqref{LimitsValpha}. Obviously we have  $v_\alpha^-=v_\alpha(-1)$ and $ v_\alpha^+=v_\alpha(1)$.
In addition, both the values $v_\alpha(-1)$ and $v_\alpha(1)$ are different from zero in view of  uniqueness  for the Cauchy problem, because $v_\alpha$ is a non-trivial solution.

Problem \eqref{HBSproblemV} can be regarded as a spectral problem with spectral parameter $\alpha$. If $V$ is a function of fixed sign, then
\eqref{HBSproblemV} is a standard Sturm-Liouville problem and $\mathcal{R}(V)$ coincides with the spectrum of a self-adjoint operator in weighted Lebesgue spaces $L_2(V,\cI)$.
Otherwise, we can interpret \eqref{HBSproblemV}   as the eigenvalue problem  with  indefinite weight function $V$; the problem can be associated with a self-adjoint non-negative operator $K$ in a Krein space \cite{GolovatyManko:2009, GolovatyMFAT:2012}.
In both the cases the spectra of  such operators are real and discrete with accumulation points at $-\infty$ or $+\infty$ only.  Moreover all nonzero eigenvalues are simple; for the case of the Krein space, $\alpha=0$ is generally semi-simple.
 The reader can also refers to \cite{IohvidovKreinLanger} for the details of the theory of self-adjoint operators in Krein spaces.
It follows from the simplicity of spectra that  half-bound state $v_\alpha$ is  unique up to a scalar factor.
\end{proof}

The set $\mathcal{R}(h_1,h_2)$ of coupling constants, for which the operator $B$ possesses the double zero-range resonance, is also rich for any pair of $h_1$ and $h_2$.
We set $m_k=\|\fpr{h}_k\|$ and $\tau=\la \fpr{h}_1, \fpr{h}_2\ra$.

\begin{lem}\label{LemmaHBS}
Assume that $h_1$,  $h_2$ are linearly independent functions  of zero mean. Then set $\mathcal{R}(h_1,h_2)$ of double zero-range resonance for  operator $B$  is  the circle
\begin{equation*}
  \mathcal{R}(h_1,h_2)=\{\beta\in \mathbb{C}\colon |\beta-\beta_0|=\rho \}
\end{equation*}
in the complex plane, where
\begin{equation*}
 \beta_0=\frac{\bar{\tau}}{m_1^2m_2^2-|\tau|^2},\qquad
 \rho =\frac{m_1m_2}{m_1^2m_2^2-|\tau|^2}.
\end{equation*}
In addition, if $\beta\in \mathcal{R}(h_1,h_2)$, then the constant function and function
 $$
 \omega_\beta=e^{i\arg(\beta^{-1}+\tau)}m_2 h_1^{(-2)}-m_1 h_2^{(-2)}
 $$
are two linearly independent half-bound states of  $B$.
\end{lem}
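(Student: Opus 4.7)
The plan is to characterise half-bound states of $B$ directly from \eqref{HBSEqn} by reducing the problem to a $2\times 2$ homogeneous linear system over $\mathbb{C}$. Setting $c_k=\la h_k,v\ra$, equation \eqref{HBSEqn} becomes $v''=\bar{\beta} c_2 h_1+\beta c_1 h_2$, so any solution has the form
\begin{equation*}
v(x)=A+Bx+\bar{\beta} c_2\, h_1^{(-2)}(x)+\beta c_1\, h_2^{(-2)}(x).
\end{equation*}
Since $h_1$ and $h_2$ have zero mean, $h_k^{(-1)}$ are compactly supported and the second antiderivatives $h_k^{(-2)}$ are eventually constant at $\pm\infty$, hence bounded. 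Boundedness of $v$ therefore forces $B=0$. Observe that the constant function (corresponding to $c_1=c_2=0$) is a half-bound state for every $\beta$; the task is to determine exactly which $\beta$ admit a second, linearly independent half-bound state.

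Next I would impose the self-consistency relations $c_k=\la h_k,v\ra$. The additive constant $A$ drops out because $\int h_k=0$, and integration by parts — the boundary terms vanish since $h_k^{(-1)}$ have compact support — gives the key identity $\la h_k,h_j^{(-2)}\ra=-\la h_k^{(-1)},h_j^{(-1)}\ra$. With $m_k^2=\|h_k^{(-1)}\|^2$ and $\tau=\la h_1^{(-1)},h_2^{(-1)}\ra$ this reduces the consistency requirement to the homogeneous system
\begin{equation*}
\begin{pmatrix}1+\beta\tau & \bar{\beta}\,m_1^2\\ \beta\,m_2^2 & 1+\bar{\beta}\bar{\tau}\end{pmatrix}
\begin{pmatrix}c_1\\ c_2\end{pmatrix}=0.
\end{equation*}
A nontrivial $(c_1,c_2)$ exists precisely when the determinant vanishes, i.e.\ $|1+\beta\tau|^2=|\beta|^2 m_1^2 m_2^2$. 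Linear independence of $h_1,h_2$ implies the same for $h_1^{(-1)},h_2^{(-1)}$, so strict Cauchy--Schwarz gives $D:=m_1^2 m_2^2-|\tau|^2>0$; expanding the determinant condition and completing the square in $\beta$ produces the circle $|\beta-\bar\tau/D|=m_1m_2/D$, matching the announced $\beta_0$ and $\rho$.

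For $\beta$ on this circle the kernel is one-dimensional and a direct reading of the first row gives $(c_1,c_2)\propto(\bar\beta m_1^2,-(1+\beta\tau))$. Plugging this back into the representation of $v$, setting $A=0$, and dividing by $-|\beta|^2 m_1$ yields a half-bound state whose $h_2^{(-2)}$-coefficient equals $-m_1$ and whose $h_1^{(-2)}$-coefficient equals $(\beta^{-1}+\tau)/m_1$. The resonance condition rewrites as $|\beta^{-1}+\tau|=m_1m_2$, which lets one pull out the phase $\beta^{-1}+\tau=e^{i\arg(\beta^{-1}+\tau)}m_1m_2$ and recover exactly the formula for $\omega_\beta$ in the statement. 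Linear independence of $\omega_\beta$ from the constant function is automatic since $h_k^{(-2)}\not\equiv\text{const}$ whenever $h_k\not\equiv 0$, and $h_1,h_2$ being linearly independent excludes the degenerate cancellation between the two $h_k^{(-2)}$-terms.

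The main point requiring care — rather than a real obstacle — is the bookkeeping of complex conjugates: the conjugate-linearity of $\la\cdot,\cdot\ra$ in one argument is what places $\bar\beta$ next to $c_2 m_1^2$ but $\beta$ next to $c_1 m_2^2$ in the matrix, and one must match the phase $e^{i\arg(\beta^{-1}+\tau)}$ between the algebraic kernel vector and the explicit $\omega_\beta$. Once a convention for the inner product is fixed, the rest is bookkeeping.
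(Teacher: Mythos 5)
Your proof is correct and follows essentially the same route as the paper: reduce the bounded solutions of \eqref{HBSEqn} to a $2\times2$ homogeneous system via the integration-by-parts identity $\la h_j,h_k^{(-2)}\ra=-\la h_j^{(-1)},h_k^{(-1)}\ra$, obtain the resonance condition $|1+\beta\tau|=|\beta|\,m_1m_2$ from the vanishing determinant, and read off the kernel vector to produce $\omega_\beta$. The only (cosmetic) differences are that you parametrize solutions by the moments $c_k=\la h_k,v\ra$ instead of the coefficients of $h_k^{(-2)}$ — which makes the necessity direction slightly more explicit — and you get the circle by completing the square rather than by the paper's inversion $z\mapsto z^{-1}$ of the circle $|z+\tau|=m_1m_2$.
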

Note that circle $\mathcal{R}(h_1,h_2)$ is well defined for linearly independent  $h_1$ and $h_2$, because then the first antiderivatives $\fpr{h}_1$ and $\fpr{h}_2$ are also linearly independent, and $|\tau|<m_1m_2$ in view of the Cauchy-Schwartz inequality. For instance,
if functions $\fpr{h}_1$ and $\fpr{h}_2$ are orthonormal, then $\mathcal{R}(h_1,h_2)$ is a unit circle centered at the origin, since $m_1=m_2=1$ and $\tau=0$. If $\fpr{h}_1$ and $\fpr{h}_2$ are simply orthogonal, then
$\mathcal{R}(h_1,h_2)=\{\beta\in \mathbb{C}\colon |\beta|=m_1^{-1}m_2^{-1} \}$. In the case when $h_2=h_1+\eps g$ and $\eps$ is small, that is to say, the angle between $h_1$ and $h_2$ is small, the center $\beta_0$ is
far from the origin and the radius $\rho$ is large, because then the difference $m_1m_2-|\tau|$ is of order $\eps$.
\begin{proof}
We start with the observation that $v=1$ is obviously a solution of equation
 \begin{equation*}
 -v''+\bar{\beta}\,\la h_2, v\ra\, h_1+\beta\,\la h_1,v\ra\, h_2= 0,
\end{equation*}
since $h_k$ are functions with zero-mean values.
For the same reason, the second anti-derivatives $\spr{h}_k$ are bounded on the whole line.
Then regarding this equation as the ``non-homogeneous'' one
\begin{equation}\label{HBSEqnNH}
 v''=\bar{\beta}\,\la h_2,v\ra\,h_1+\beta\,\la h_1,v\ra\,h_2,
\end{equation}
we can look for another half-bound state in the form
$\omega=c_1\spr{h}_1+c_2\spr{h}_2$. We do not take into account solution $x$ of the homogeneous equation, because it is unbounded as $|x|\to\infty$.

Since $h_1$ and $h_2$ are linearly independent, substituting  $\omega$ into \eqref{HBSEqnNH} yields
\begin{equation}\label{SystFV}
  \begin{cases}
    \beta\,\la h_1,\spr{h}_1\ra\,c_1+(\beta\,\la h_1,\spr{h}_2\ra-1)\,c_2=0,\\
   ( \bar{\beta}\,\la h_2,\spr{h}_1\ra-1)\,c_1+ \bar{\beta}\,\la h_2,\spr{h}_2\ra \,c_2=0.
  \end{cases}
\end{equation}
Because $h_j$ has compact support, the scalar product $\la h_j,\spr{h}_k \ra$ is finite, even though antiderivative $\spr{h}_k$ does not belong to $L_2(\Real)$. In addition, the integrating by parts shows
$ \la h_j,\spr{h}_k\ra=-\la \fpr{h}_j,\fpr{h}_k\ra$. Then \eqref{SystFV} becomes
\begin{equation}\label{SystSV}
  \begin{cases}
    \hskip19pt\beta m_1^2\,c_1+(\beta \tau+1)\,c_2=0,\\
   (\overline{\beta\tau}+1)\,c_1+\hskip19pt \bar{\beta}m_2^2 \,c_2=0.
  \end{cases}
\end{equation}
This  system has a non-trivial solution $(c_1,c_2)$ if and only if
$|\beta|m_1m_2=|\beta \tau+1|$. The condition can be written as
$|\beta^{-1}+ \tau|=m_1m_2$.

Given $a\in \mathbb{C}$ and $r\in \Real$, we consider the  circle $\{z\in \mathbb{C}\colon |z-a|=r\}$. Suppose that $|a|<r$. The  mapping $z\mapsto z^{-1}$ is a bijection from this circle onto another one
\begin{equation*}
\left\{
z\in \mathbb{C}\colon\left|z+\frac{\bar{a}}{r^2-|a|^2}\right|=\frac{r}{r^2-|a|^2}
\right\},
\end{equation*}
as is easy to check. Therefore the resonance region $\mathcal{R}(h_1,h_2)$ arises as the image of the circle
$\{z\in \mathbb{C}\colon|z+ \tau|=m_1m_2\}$
under the transformation $z\mapsto z^{-1}$. Note that $|\tau|<m_1m_2$  by the Cauchy-Schwartz inequality.

If $\beta\in \mathcal{R}(h_1,h_2)$, then \eqref{SystSV} admits a nontrivial solution having the form
\begin{equation*}
c_1=e^{i\arg(\beta^{-1}+\tau)}m_2, \qquad c_2=-m_1.
\end{equation*}
In fact, substituting this solution into the first equation yields
\begin{multline*}
   \beta m_1^2\,c_1+(\beta \tau+1)\,c_2=
 \beta m_1^2 m_2 e^{i\arg(\beta^{-1}+\tau)} -m_1(\beta \tau+1)\\
 = \beta m_1 \,|\beta^{-1}+\tau|\, e^{i\arg(\beta^{-1}+\tau)} -m_1(\beta \tau+1)= \beta m_1 (\beta^{-1}+\tau) -m_1(\beta \tau+1)=0,
  \end{multline*}
since $m_1m_2=|\beta^{-1}+ \tau|$.
Hence $\omega_\beta=e^{i\arg(\beta^{-1}+\tau)}m_2 h_1^{(-2)}-m_1 h_2^{(-2)}$ is a half-bound state of  $B$, which is different from the constant one.
\end{proof}

\section{Proof of Theorems \ref{Thrm1} and \ref{ThmT} }
We start with some  assertions, which  will be used below.

\begin{lem}\label{LemmaStrongConv}
Let $\{S_\eps\}_{\eps>0}$ be a family of self-adjoint operators in a Hilbert space $\mathcal{L}$ and $\{W_\eps\}_{\eps>0}$ be a family of
unitary operators in $\mathcal{L}$.  Assume that $S_\eps\to S$ as $\eps\to 0$ in the norm resolvent sense, $W_\eps\to W$ in the strong operator topology as $\eps\to 0$ and $W$ is a unitary operator in $\mathcal{L}$.
Then the family of operators $Q_{\eps}=W_\eps S_\eps W_\eps^{-1}$ converges in the strong resolvent sense to the operator $Q=WSW^{-1}$ with the domain  $\{\phi\in \mathcal{L}\colon  W^{-1}\phi\in \dom S\}$.
\end{lem}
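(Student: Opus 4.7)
The plan is to verify strong resolvent convergence by inspecting the resolvents at a single non-real point. First I observe that $Q_\eps$ and $Q$ are self-adjoint as unitary conjugates of self-adjoint operators, with resolvents given by $(Q_\eps - z)^{-1} = W_\eps R_\eps(z) W_\eps^{-1}$ and $(Q - z)^{-1} = W R(z) W^{-1}$, where $R_\eps(z) = (S_\eps - z)^{-1}$ and $R(z) = (S - z)^{-1}$; this identity also pins down the domain of $Q$ as the set of $\phi$ with $W^{-1}\phi \in \dom S$, exactly as claimed. The task then reduces to showing $W_\eps R_\eps(z) W_\eps^{-1}\phi \to W R(z) W^{-1}\phi$ strongly in $\mathcal{L}$ for every $\phi$.

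The essential preliminary step is to promote the strong convergence $W_\eps \to W$ to strong convergence of inverses, $W_\eps^{-1} \to W^{-1}$. This is not automatic for general bounded operators, but under the unitarity hypothesis it follows from the polarization-style identity
\[
\|(W_\eps^{-1} - W^{-1})\phi\|^2 = 2\|\phi\|^2 - 2\,{\rm Re}\la \phi, W_\eps W^{-1}\phi\ra,
\]
combined with the fact that $W_\eps(W^{-1}\phi) \to W(W^{-1}\phi) = \phi$ by hypothesis. I would record this as a one-line observation before entering the main estimate.

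With that in hand I would telescope the difference through two inserted terms:
\[
W_\eps R_\eps W_\eps^{-1}\phi - W R W^{-1}\phi
= W_\eps[R_\eps - R]W_\eps^{-1}\phi + W_\eps R[W_\eps^{-1} - W^{-1}]\phi + (W_\eps - W)RW^{-1}\phi.
\]
Each summand vanishes as $\eps\to 0$: (a) by the norm resolvent convergence $S_\eps \to S$ together with $\|W_\eps\| = \|W_\eps^{-1}\| = 1$; (b) by boundedness of $R(z)$ combined with the preliminary strong convergence of the inverses, applied to the fixed vector $\phi$; and (c) by strong convergence $W_\eps \to W$ applied to the \emph{fixed} vector $R(z) W^{-1}\phi$.

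The only delicate point is (b); (a) and (c) are routine. So the main obstacle is really the auxiliary assertion that strong convergence of unitaries implies strong convergence of their inverses, which is exactly where the hypothesis that both $W_\eps$ and $W$ are unitary is used essentially; if one merely assumed strong convergence of $W_\eps$ to an invertible bounded $W$, the conclusion would fail in general.
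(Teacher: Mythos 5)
Your proposal is correct and follows essentially the same route as the paper: the identical three-term telescoping of $W_\eps(S_\eps-\zeta)^{-1}W_\eps^{-1}-W(S-\zeta)^{-1}W^{-1}$, with the first term controlled by norm resolvent convergence and the other two by strong convergence applied to fixed vectors. The only difference is that you explicitly justify the strong convergence $W_\eps^{-1}\to W^{-1}$ via the unitarity identity, a step the paper's proof uses tacitly when it invokes ``the strong convergence of $W_\eps$''; this is a welcome clarification but not a different argument.
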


\begin{proof}   We first note that
\begin{multline*}
(Q_\eps-\zeta)^{-1}-(Q-\zeta)^{-1}
=W_\eps \bigl((S_\eps-\zeta)^{-1}-(S-\zeta)^{-1}\bigr)W_\eps^{-1}\\
+W_\eps(S-\zeta)^{-1}(W_\eps^{-1}-W^{-1})
+(W_\eps -W)(S-\zeta)^{-1}W^{-1},
\end{multline*}
provided $\zeta\in \mathbb{C}\setminus \mathbb{R}$.
The operator $S$ is self-adjoint as a limit of self-adjoint operators $S_\eps$ in the norm resolvent topology. From the last relation and the self-adjointness of $S$  we have
\begin{multline}\label{ResolventEst}
\|(Q_\eps-\zeta)^{-1}f-(Q-\zeta)^{-1}f\|\leq \|(S_\eps-\zeta)^{-1}-(S-\zeta)^{-1}\|\,\|f\|\\
+|\Im \zeta|^{-1}\|(W_\eps^{-1}-W^{-1})f\|+\|(W_\eps-W)(S-\zeta)^{-1}W^{-1}f\|
\end{multline}
for all $f\in \mathcal{L}$. The first term in the right-hand side tends to zero as $\eps\to 0$, since operators $S_\eps$ converge to $S$  in the norm resolvent sense. The last two terms are  infinitely small as $\eps\to 0$, in view of the strong convergence of $W_\eps$.
\end{proof}

We introduce two unitary operators
\begin{equation}\label{OprWeW}
    (W_\eps f)(x)=e^{i a\xe} f(x),\qquad
    (Wf)(x)=e^{i \mu H(x)} f(x),
\end{equation}
in $L_2(\Real)$, where $a$ and $\mu$  given by \eqref{FuncA} and \eqref{Sigma} respectively, and $H$ is the Heaviside step function
\begin{equation*}
  H(x)=
  \begin{cases}
    0, &\text{for \ }x<0,\\
    1, &\text{for \ }x>0.\\
  \end{cases}
\end{equation*}

\begin{lem}\label{LemmaConvergenceUh}
Let $W_\eps$ and $W$ be the unitary operators given by \eqref{OprWeW}.
Then  $W_\eps$ converge to  $W$ as $\eps\to 0$ in the strong operator topology.
\end{lem}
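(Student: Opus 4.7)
The plan is to establish the strong convergence by reducing it to a pointwise almost-everywhere convergence of the multiplier $e^{ia(x/\eps)}$ toward $e^{i\mu H(x)}$, and then to conclude via the dominated convergence theorem. The proof should be quite short, since the only substantive input is the geometric fact that the antiderivative $a$ of the compactly supported function $A$ is eventually constant at both ends.

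First I would fix $R>0$ so large that $\supp A\subset [-R,R]$. From the definition $a(x)=\int_{-\infty}^x A(t)\,dt$ and \eqref{Sigma} we obtain $a(x)=0$ for $x\leq -R$ and $a(x)=\mu$ for $x\geq R$. Consequently, for fixed $x\neq 0$, as soon as $\eps< |x|/R$ we have $a(x/\eps)=0$ if $x<0$ and $a(x/\eps)=\mu$ if $x>0$. Thus $a(x/\eps)\to \mu H(x)$ pointwise for every $x\in\Real\setminus\{0\}$, which is almost every $x\in\Real$. By continuity of $z\mapsto e^{iz}$ it follows that
\begin{equation*}
e^{ia(x/\eps)}\to e^{i\mu H(x)} \qquad \text{a.e. on } \Real.
\end{equation*}

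Next, for any $f\in L_2(\Real)$ I would estimate pointwise
\begin{equation*}
\big|(W_\eps f)(x)-(Wf)(x)\big|^2
= \big|e^{ia(x/\eps)}-e^{i\mu H(x)}\big|^2|f(x)|^2 \leq 4|f(x)|^2,
\end{equation*}
and since $|f|^2\in L_1(\Real)$ while the integrand tends to zero a.e., Lebesgue's dominated convergence theorem yields $\|W_\eps f-Wf\|\to 0$ as $\eps\to 0$. This gives strong convergence on all of $L_2(\Real)$, completing the proof.

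The argument has essentially no obstacle, so I would not expect any difficulty beyond carefully noting that $a$ stabilizes to $0$ and $\mu$ outside $[-R,R]$. The only subtlety worth mentioning is that pointwise convergence fails at the single point $x=0$, but since this is a null set it is irrelevant for the $L_2$ conclusion. No uniform convergence can be expected, which is why one passes through dominated convergence rather than attempting a norm estimate directly.
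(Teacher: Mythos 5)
Your proof is correct and follows essentially the same route as the paper: both rest on the observation that $a(x/\eps)$ coincides with $\mu H(x)$ outside a shrinking neighbourhood of the origin, together with the uniform bound $|e^{ia(x/\eps)}-e^{i\mu H(x)}|\le 2$. The only cosmetic difference is the final step, where the paper bounds $\|W_\eps f-Wf\|^2$ by $4\int_{-\eps}^{\eps}|f|^2\,dx$ and invokes absolute continuity of the Lebesgue integral, while you use a.e.\ pointwise convergence plus dominated convergence; both are equally valid.
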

\begin{proof}
Without loss of generality we can assume that the support of the magnetic potential $A$ lies in $(-1,1)$.
Therefore $a(\eps^{-1}x)=0$ for $x<-\eps$ and $a(\eps^{-1}x)=\mu$ for $x>\eps$.
For each $f\in L_2(\Real)$ we have
\begin{multline}\label{ConvergenceUhS1}
        \|W_\eps f- Wf\|^2\leq\int_\Real\left|e^{i  a\xe}-e^{i \mu H(x)}\right|^2\,|f(x)|^2\,dx\\
        =\int_{-\eps}^\eps \left|e^{i a\xe}-e^{i  \mu H(x)}\right|^2\,|f(x)|^2\,dx\leq
        4 \int_{-\eps}^\eps |f(x)|^2\,dx,
    \end{multline}
since $a(\eps^{-1}x)=\mu H(x)$ for $|x|>\eps$.
The  right-hand side of \eqref{ConvergenceUhS1} tends to zero as $\eps\to 0$, by absolute continuity of the Lebesgue integral.
\end{proof}

\subsection{Proof of Theorem \ref{Thrm1}}
Let us consider the Schr\"{o}dinger operators
\begin{equation}\label{Seps}
    S_\eps= -\frac{d^2}{dx^2}+V_0(x)+\frac{\alpha}{\eps^2}\,V\left(\frac{x}{\eps}\right)
    +\frac{1}{\nu_\eps}\,U\left(\frac{x}{\nu_\eps}\right), \quad
\dom S_\eps=W_2^2(\mathbb{R}).
\end{equation}
It is of course that $S_\eps$ is a version of operator $\OprH$ given by \eqref{OprHenm} when the magnetic potential is disabled. We also denote by $S=S(\theta,\gamma)$ the  Schr\"odinger operator acting via $S\psi=-\psi''+V_0\psi$ on functions $\psi$ in~$\mathcal{V}$ obeying the  interface conditions
\begin{equation}\label{OprSConds}
\begin{pmatrix}
    \psi(+0) \\\psi'(+0)
\end{pmatrix}
=
\begin{pmatrix}
  \theta &  0\\
  \gamma &  \theta^{-1}
\end{pmatrix}
\begin{pmatrix}
    \psi(-0) \\ \psi'(-0)
\end{pmatrix}
\end{equation}
at the origin.
For all real $\theta$ and $\gamma$, this operator is  self-adjoint.

The proof of Theorem~\ref{Thrm1} is based on the  results obtained in \cite{GolovatyMFAT:2012} and \cite{GolovatyIEOT:2012}.
Let $\{\nu_\eps\}_{\eps>0}$ be a sequence  such that $\nu_\eps\to0$ as $\eps\to 0$ and the ratio $\nu_\eps/\eps$ tends to  $\lambda\in \Lambda$.
If the potential $\alpha V$ is resonant, then the operator family $S_\eps$ converges   in the norm resolvent sense as $\eps\to 0$ to  operator  $S=S(\theta(\alpha),\gamma(\alpha, \lambda))$,  where   $\theta$, $\gamma$ are given by \eqref{MapTheta}--\eqref{MapKappaInfty}.
We see at once that  operator $\OprHe$ is unitarily equivalent to  operator $S_\eps$, i.e.,  $\OprHe=W_\eps S_\eps W_\eps^{-1}$ with the unitary operator (the gauge transformation) $W_\eps$  given by \eqref{OprWeW} \cite{AvronHerbstSimon:1978}. For instance, it is easy to check that
\begin{equation*}
  -e^{i  a\xe}\frac{d^2}{dx^2}\left(e^{-i  a\xe}\phi(x)\right)=\Big(\rmi\,\frac{d}{dx}
    +\frac{1}{\eps}\,A\Big(\frac{x}{\eps}\Big)\Big)^2\phi(x),
\end{equation*}
since $a'=A$.
Next,   $W^{-1}(\dom \cH)\subset \dom S$, where $W^{-1}f=e^{-i \mu H}f$.
In fact, given $\phi\in \dom \cH$, we set $\psi=W^{-1}\phi=e^{-i \mu H}\phi$. Then we have $\psi(+0)=e^{-i \mu} \phi(+0)$, $\psi'(+0)=e^{-i \mu}\phi'(+0)$, $\psi(-0)=\phi(-0)$ and $\psi'(-0)=\phi'(-0)$.
Rewriting conditions \eqref{PointIntsH} for $\phi$ in the form
\begin{equation*}
\begin{pmatrix} e^{-i \mu}\phi(+0) \\ e^{-i \mu}\phi'(+0) \end{pmatrix}
=
\begin{pmatrix}
  \theta(\alpha) &  0\\
  \gamma(\alpha,\lambda) &  \theta(\alpha)^{-1}
\end{pmatrix}
\begin{pmatrix} \phi(-0) \\ \phi'(-0) \end{pmatrix},
\end{equation*}
we ascertain that $\psi$ satisfies \eqref{OprSConds} and therefore $\psi\in \dom S$. Obviously,
$$
W^{-1}\colon \dom \cH\to \dom S
$$
is a linear isomorphism. Therefore, the limit operator $\cH$ in Theorem~\ref{Thrm1} can be written as $\cH = WS W^{-1}$.

In view of Lemma~\ref{LemmaConvergenceUh}, the gauge transformations $W_\eps$ converge to $W$ in the strong operator topology.
Since the resolvents of $S_\eps$ converge to the resolvent of $S$
uniformly, we deduce from Lemma~\ref{LemmaStrongConv} that
\begin{equation*}
  \OprHe=W_\eps S_\eps W_\eps^{-1}\to WS W^{-1}=\cH \qquad\text{as } \eps\to 0
\end{equation*}
in the strong resolvent sense.

\subsection{Proof of Theorem \ref{ThmT}}
We can now argue almost exactly as in the proof of Theorem~\ref{Thrm1}.
First of all note that operators $\OprT$ given by \eqref{Teps} are  unitarily equivalent to  operators
\begin{equation}\label{Teps}
T_\eps=-\frac{d^2}{dx^2}+V_0(x)+\frac1{\eps^3}\,G_\eps+\frac1\eps\, U\Big(\frac{x}{\eps}\Big),
\end{equation}
namely $\OprT=W_\eps T_\eps W_\eps^{-1}$ with the gauge transformation $W_\eps$  given by \eqref{OprWeW}. Operator $G_\eps=G_\eps(g_1,g_2)$ is a  rank-two operator of the form
\begin{equation*}
  (G_\eps \psi)(x)=
  \bar{\beta}\,\la g_2(\eps^{-1}\,\cdot\,),\psi\ra\,g_1(\eps^{-1}x)
  +\beta\,\la g_1(\eps^{-1}\,\cdot\,),\psi\ra\,g_2(\eps^{-1}x),
\end{equation*}
where $g_1=e^{-i  a}f_1$ and $g_2=e^{-i  a}f_2$ are the same functions as in \eqref{GkNkP}.
In fact, a trivial verification shows that $F_\eps=W_\eps G_\eps W_\eps^{-1}$.

In Theorem~\ref{ThmT} we assumed $g_1$,  $g_2$ were linearly independent functions  of zero mean. Moreover $a_2\neq \bar{\kappa} a_1$. It has recently been proved in \cite{GolovatyIEOT2018} that if additionally
coupling constant $\beta$ belongs to the set $\mathcal{R}(g_1,g_2)$ of double zero-energy resonance for $B=-\frac{d^2}{dx^2}+\bar{\beta}\,\la g_2,\,\cdot\,\ra\,g_1+\beta\,\la g_1,\,\cdot\,\ra\,g_2$, then operators
$T_\eps$  converge as $\eps\to 0$ in the norm resolvent sense to  operator $T\psi=-\psi''+V_0\psi$ acting on functions $\psi\in\mathcal{V}$ obeying the  interface conditions
\begin{equation*}
\begin{pmatrix}
    \psi(+0) \\\psi'(+0)
\end{pmatrix}
=
e^{i \arg (a_2-\kappa \bar{a}_1)}\begin{pmatrix}
\dfrac{|\kappa|^2a_0-2{\rm Re}(\overline{\kappa} a_1)+a_2}
{|a_2-\overline{\kappa} a_1|}    &    \dfrac{|\kappa|^2}{|a_2-\overline{\kappa} a_1|}\\
      \dfrac{a_0a_2-|a_1|^2}{|a_2-\overline{\kappa} a_1|}  &      \dfrac{a_2}{|a_2-\overline{\kappa} a_1|}
  \end{pmatrix}
\begin{pmatrix}
    \psi(-0) \\ \psi'(-0)
\end{pmatrix}
\end{equation*}
at the origin.  Therefore $\cT=WTW^{-1}$,
by reasoning similar to that in the proof of Theorem~\ref{Thrm1}.
We can now repeatedly apply Lemma~\ref{LemmaStrongConv} for
operator families $\{T_\eps\}_{\eps>0}$ and  $\{W_\eps\}_{\eps>0}$
to deduce the strong resolvent convergence
\begin{equation*}
  \OprT=W_\eps T_\eps W_\eps^{-1}\to WTW^{-1}=\cT
\end{equation*}
as $\eps\to 0$.

\section{Final Remarks}

In Theorem~\ref{Thrm1} we obtained in the limit the coupling conditions
\begin{equation*}
\begin{pmatrix} \phi(+0) \\ \phi'(+0) \end{pmatrix}
=e^{i\mu(A)}\begin{pmatrix} \theta(V) & 0 \\ \gamma(V,U)  & \theta^{-1}(V) \end{pmatrix}
\begin{pmatrix} \phi(-0) \\ \phi'(-0)\end{pmatrix},
\end{equation*}
in which the magnetic potential $A$ appeared only in the phase factor $e^{i\mu(A)}$. This situation is typical for  potential perturbations of Schr\"{o}dinger operators.

Unlike the previous case, in which the potential perturbation was invariant with respect to the gauge transformation $W_\eps$, the finite-rank perturbation $F_\eps$ is not invariant. In fact,  $F_\eps=W_\eps G_\eps W_\eps^{-1}$; transformation $W_\eps$ rotates the plane ${\rm span}\{f_1,f_2\}$ when we change parameter $\eps$.
This is certainly the main reason why the magnetic field $A$ has an effect on all coefficients in the coupling conditions
\begin{equation*}
\begin{pmatrix} \phi(+0) \\ \phi'(+0) \end{pmatrix}
=e^{i\mu(A)}\begin{pmatrix} c_{11}(A) & c_{12}(A) \\ c_{21}(A) & c_{22}(A) \end{pmatrix}
\begin{pmatrix} \phi(-0) \\ \phi'(-0)\end{pmatrix}
\end{equation*}
appearing as the solvable model in Theorem~\ref{ThmT}. Of course, the coefficients $c_{kj}$  depend on potentials $V$, $U$ and functions $f_1$, $f_2$ too.


\begin{thebibliography}{99}


\bibitem{AlbeverioGesztesyHoeghKrohnHolden2edition}
    S. Albeverio,  F. Gesztesy,  R. H{\o}egh-Krohn,  H. Holden,
    \textit{Solvable Models In Quantum Mechanics.}
    Providence, RI: AMS Chelsea Publishing, 2005.


\bibitem{AlbeverioKurasov}
    S. Albeverio, P. Kurasov,
    \textit{Singular Perturbations of Differential Operators and Solvable Schr\"{o}dinger Type Operators},
    Cambridge: Univ. Press, 2000.


\bibitem{AlbeverioCacciapuotiFinco:2007}
    S. Albeverio, C. Cacciapuoti,  D. Finco,
    Coupling in the singular limit of thin quantum waveguides.
    J. Math. Phys. (3) {\bf 48} (2007), 032103, 21pp.


\bibitem{AlbeverioDabrowskiKurasov:1998}
    S. Albeverio, L. D\c{a}browski, P. Kurasov,
    \textit{Symmetries of Schr\"{o}dinger operators with point interactions},
    Lett. Math. Phys.  \textbf{45} (1998),  33--47.


\bibitem{AlbeverioFeiKurasov}
     S. Albeverio, S.-M. Fei, P. Kurasov,
    \textit{Gauge fields, point interactions and few-body problems in one dimension}, Rep. Math. Phys.
\textbf{53} (2004), no.~3,  363--370.


\bibitem{AvronHerbstSimon:1978}
    J. Avron, I. Herbst, B. Simon, \textit{Schr\"{o}dinger operators with magnetic
    fields. I. General interactions}, Duke Mathematical Journal, \textbf{45} (1978), no.~4, 847--883.



\bibitem{AlbeverioNizhnik2013}
    S. Albeverio, L. Nizhnik,  \textit{Schr\"{o}dinger operators with nonlocal potentials.}
    Methods Funct. Anal. Topology. 19(3) 199--210 (2013).

\bibitem{AlbeverioFassariRinaldi2013}
    S. Albeverio, S. Fassari, F. Rinaldi, \textit{A remarkable spectral feature of the Schr\"odinger Hamiltonian of the harmonic oscillator perturbed by an attractive $\delta'$-interaction centred at the origin: double degeneracy and level crossing.}
    Journal of Physics A: Mathematical and Theoretical, 46(38), 385305 (2013).


\bibitem{AlbeverioFassariRinaldi2015}
 S. Albeverio, S. Fassari, F. Rinaldi, \textit{The Hamiltonian of the harmonic oscillator with an attractive $\delta'$-interaction centred at the origin as approximated by the one with a triple of attractive $\delta$-interactions.} Journal of Physics A: Mathematical and Theoretical 49.2 (2015): 025302.



\bibitem{CacciapuotiExner:2007}
    C.~Cacciapuoti, P.~Exner,
    \textit{Nontrivial edge coupling from a Dirichlet network squeezing:
    the case of a bent waveguide.}
    {J. Phys. A: Math. Theor.} (26) {\bf 40} (2007), F511--F523.

\bibitem{CacciapuotiFinco:2007}
    C. Cacciapuoti, D. Finco,
   \textit{ Graph-like models for thin waveguides with Robin boundary conditions.}   {Asymptotic Analysis} (3--4) {\bf 70} (2010), 199--230.


\bibitem{ChernoffHughes:1993}  P. Chernoff,  R. Hughes, \textit{A new class of point interactions in one dimension}~// J. Funct. Anal., \textbf{111} (1993), 97--117.




\bibitem{CoutinhoNogamiTomio:1999}
    F.A.B. Coutinho, Y. Nogami, L. Tomio, \textit{Time-reversal aspect of the point interactions in
    one-dimensional quantum mechanics,} J. Phys. A \textbf{32} (1999), L133--L136.

\bibitem{ExnerPost:2012}
P. Exner and O. Post, \textit{A general approximation of quantum graph vertex couplings by scaled Schr\"{o}dinger operators on thin branched manifolds}, Commun. Math. Phys. (2013) 322:207.

\bibitem{ExnerManko2014}
    P. Exner and S. Manko.
    \textit{Approximations of quantum-graph vertex couplings by singularly scaled rank-one operators.}
    Letters in Mathematical Physics 104.9 (2014): 1079--1094.

\bibitem{GolovatyMFAT:2012}  Yu. Golovaty, \textit{Schr\"odinger operators with $(\alpha\delta'+\beta\delta)$-like potentials: norm resolvent convergence and solvable models}. Methods Funct. Anal. Topology, Vol. 18, no. 3, 2012, pp. 243--255.

\bibitem{GolovatyIEOT:2012}
    Yu. Golovaty, \textit{1D Schr\"odinger Operators with Short Range Interactions: Two-Scale Regularization of Distributional Potentials.} Integr. Equ. Oper. Theory, 2013, Vol. 75, Is. 3, pp 341-362.

\bibitem{GolovatyManko:2009} Yu.~Golovaty and  S.~Man'ko,
    \textit{Solvable models for the Schr\"odinger operators
    with $\delta'$-like potentials.}
    { Ukr. Math. Bulletin} (2) {\bf 6} (2009), 169--203;     arXiv:0909.1034 [math.SP].

\bibitem{GolovatyHryniv:2010}
    Yu.~Golovaty, R.~Hryniv,
    \textit{On norm resolvent convergence of Schr\"{o}dinger operators with $\delta'$-like potentials},
    {J. Phys. A: Math. Theor.} \textbf{43} (2010), no.~15, 155204 (14pp);
    Corrigendum  J. Phys. A: Math. Theor. \textbf{44} (2011), 049802;  arXiv:0911.1046[math.SP].


\bibitem{GolovatyHryniv:2013}
    Yu. D. Golovaty and R. O. Hryniv,
    \textit{Norm resolvent convergence of singularly scaled Schr\"{o}dinger operators and $\delta'$-potentials.}
    Proc. Royal Soc. Edinburgh: Sec. A Mathematics, 143 (2013), pp 791--816.

\bibitem{GolovatyJPA2018} Yu. Golovaty,  \textit{Two-parametric $\delta'$-interactions: approximation  by Schr\"{o}dinger operators with localized rank-two perturbations}.
     {J. Phys. A: Math. Theor.} (25) {\bf 51} (2018), 255202.


\bibitem{GolovatyIEOT2018}
    Golovaty, Y. \textit{Schr\"odinger operators with singular rank-two perturbations and point interactions.}  Integr. Equ. Oper. Theory, (2018) 90: 57.

\bibitem{IohvidovKreinLanger}
I. S. Iohvidov, M. G. Krein and H. Langer, \textit{Introduction to the spectral theory of
operators in spaces with an indefinite metric}. Mathematical Research 9, Akademie-Verlag,
Berlin, 1982.


\bibitem{KurasovJMAA:1996}
    P. Kurasov,
    \textit{Distribution Theory for Discontinuous Test Functions and Differential Operators with Generalized Coefficients},
    J. Math. Anal. and Appl. \textbf{201} (1996), 297--323.

\bibitem{MankoJPA:2010}
    S.~S.~Man'ko, O\textit{n $\delta'$-like potential scattering on star graphs},
    J. Phys. A: Math. Theor \textbf{43} (2010), no.~44, 445304 (14pp).

%


\bibitem{GadellaNegroNietoPL2009}
    M. Gadella, J. Negro, L. M. Nieto \textit{Bound states and scattering coefficients of the $-a\delta(x)+ b\delta'(x)$ potential.} Physics Letters A, 373(2009) no. 15, 1310-1313.



\bibitem{GadellaGlasserNieto2011}
   M. Gadella, M. L. Glasser,  L. M. Nieto,(2011). \textit{One dimensional models with a singular potential of the type $-a\delta(x)+ b\delta'(x)$.} International Journal of Theoretical Physics, 50(7), 2144-2152.

\bibitem{GadellaGarcia-Ferrero2014}
  M. Gadella, M. A. Garc\'{\i}a-Ferrero, S. Gonz\'{a}lez-Martin, F. H. Maldonado-Villamizar,
 \textit{The infinite square well with a point interaction: a discussion on the different parameterizations.}
 International Journal of Theoretical Physics, (2014) 53: 1614.

\bibitem{Tamura1999}
    H. Tamura, \textit{Magnetic scattering at low energy in two dimensions}, Nagoya Math. J.
    \textbf{155} (1999), 95--151.

\bibitem{Tamura2001}
    H. Tamura, \textit{Norm resolvent convergence to magnetic Schr\"{o}dinger operators with point interactions},
    Rep. Math. Phys. \textbf{13} (2001), no.~4, 465--511.

\bibitem{Tamura2003}
    H. Tamura, \textit{Resolvent convergence in norm for Dirac operator with Aharonov-Bohm field,} J. Math. Phys. \textbf{44} (2003), 2967.

\bibitem{Zolotaryuk:2008}
    A. V. Zolotaryuk,
    \textit{Two-parametric resonant tunneling across the $\delta'(x)$ potential},
    Adv. Sci. Lett. \textbf{1} (2008), 187--191.

\bibitem{Zolotaryuk:2010}
    A. V. Zolotaryuk,
    \textit{Point interactions of the dipole type defined through a three-parametric power regularization},
    {J. Phys. A: Math. Theor.} \textbf{43} (2010), 105302 (21 pp).


 \bibitem{Zolotaryuks11}
    A. V. Zolotaryuk and Y. Zolotaryuk, \textit{Controlling a resonant transmission across the $\delta'(x)$-potential: the inverse problem.} J. Phys. A: Math. Theor. 44 (2011) 375305 (17pp).


\bibitem{ZolotaryukThreeDelta17}
    A. V. Zolotaryuk,
    \textit{Families of one-point interactions resulting from the squeezing limit of the sum of two- and three-delta-like potentials.} Journal of Physics A: Mathematical and Theoretical,  V. 50 (2017), no. 22, p.225303.

\end{thebibliography}
\end{document}